\documentclass[10pt, a4paper, reqno, english]{amsart}

\usepackage{hyperref}
\usepackage{amssymb,mathrsfs,bbm,dsfont}
\usepackage[all]{xy}
\usepackage{enumitem}
\usepackage{tikz}
\usetikzlibrary{matrix,arrows}
\usepackage{comment}

\newcommand\ZZ{\mathbb{Z}}

\newcommand\CC{\mathbb{C}}
\newcommand\RR{\mathbb{R}}

\newcommand\QQ{\mathbb{Q}}

\newcommand\PP{\mathbb{P}} 
\newcommand{\OO}{\mathcal{O}} 
\DeclareMathOperator{\Cl}{Cl} 
\DeclareMathOperator{\meas}{meas} 

\newcommand{\idealfull}{\mathcal F_m}
\newcommand{\idealfulltriv}{\mathcal F_{m,0}}
\newcommand{\idealres}{\mathcal R_m} 

\newtheorem{theorem}{Theorem}
\newtheorem{lemma}[theorem]{Lemma}
\newtheorem{prop}[theorem]{Proposition}
\newtheorem{cor}[theorem]{Corollary}
\theoremstyle{definition}

\newtheorem{proposition}[theorem]{Proposition}

\definecolor{darkred}{rgb}{0.5,0,0}
  \definecolor{darkgreen}{rgb}{0,0.5,0}
  \definecolor{darkblue}{rgb}{0,0,0.7}
  \hypersetup{linkcolor=darkblue,filecolor=darkgreen,urlcolor=darkred,citecolor=darkblue}
\newcommand{\marta}[1]{{\color{darkred} \sf $\spadesuit$Marta:[#1]}}

\begin{document}

\setcounter{tocdepth}{1}

\title{On $m$-fullness in number fields}

\author{Michiel Horikx}
\author{Marta Pieropan}

\address{Heinrich Heine University D\"usseldorf, Mathematical Institute, Universit\"atsstr. 1, 40225 D\"usseldorf, Germany \emph{and} 
Utrecht University, Mathematical Institute, Budapestlaan 6, 3584 CD Utrecht, The Netherlands}
\email{marta.pieropan@hhu.de}
\email{m.m.l.horikx@gmail.com}

\date{\today}


\subjclass[2020]{11R04 (11R42, 11G50)}
\keywords{Number fields, $m$-full numbers}

\begin{abstract}  
We prove asymptotic formulas for the number of $m$-full ideals of bounded norm in a given ideal class and for the number of $m$-full algebraic integers of bounded Weil height.
\end{abstract}

\maketitle

\section{Introduction}
Fix a positive integer $m$. Recall that an integer $\alpha$ is called $m$-full if $p^m\mid \alpha$ for all prime divisors $p$ of  $\alpha$.
Estimates for $m$-full integers of bounded size are given in \cite{ES34, BG58}.
More precisely, the number of $m$-full positive integers of size up to $x$ is
\begin{equation}\label{eq:m-full integers}
x^{\frac 1m} \prod_{p} \left(1+\sum_{i=m+1}^{2m-1}p^{-\frac im}\right) +O\left(x^{\frac 1{m+1}}\right).
\end{equation}

In this paper we study $m$-fullness of ideals and of algebraic integers in number fields. 
In analogy with the case of integers, we say that an ideal $I$ of the ring of integers $\OO_K$ of a number field $K$ is $m$-full if $I\subseteq \mathfrak p^m$ for all prime ideals $\mathfrak p$ of $\OO_K$ that contain $I$. 
As first result we give an asymptotic formula for the number of $m$-full ideals of bounded norm in prescribed ideal classes.

\begin{theorem}\label{thm}
Let $m\geq 2$ be an integer.
Let $K$ be a number field. Let $d_K$ be the degree of $K$ over $\QQ$, and let $\Cl(K)$ be the class group of $K$. 
For every ideal class $c\in\Cl(K)$ and for all $x\in\RR_{>0}$, let $N_{m,c}(x)$ be the number of $m$-full ideals of $\OO_K$ of ideal class $c$ and of norm up to $x$. Then
$$
N_{m,c}(x) = x^{\frac 1m}\frac{\rho_K\#\Cl(K)[m]}{h_K}\sum_{\substack{I\in \idealres\\ [I]\in c+m\Cl(K)}}N(I)^{-\frac 1m} + O(x^{\nu}(\log(x+1))^\mu),
$$
where $\rho_K$ is the residue of the Dedekind zeta function of $K$ at $s=1$, $\#\Cl(K)[m]$ is the cardinality of the $m$-torsion subgroup of $\Cl(K)$, $h_K$ is the class number of $K$, $\idealres$ is the set of ideals $I$ such that $I\subseteq \mathfrak p^{m+1}$ and $I\nsubseteq \mathfrak p^{2m}$ for all prime ideals $\mathfrak p$ that contain $I$,
$$
\nu = \begin{cases}\frac 1{m+1} & \text{ if } d_K\leq m+1, \\ \frac{d_K-1}{md_K} & \text{ otherwise,} \end{cases}
\qquad \text{ and } \qquad
\mu=\begin{cases} 1 & \text{ if } d_K=m+1,\\ 0 & \text{ otherwise.} \end{cases}
$$
\end{theorem}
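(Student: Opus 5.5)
Every $m$-full ideal factors uniquely as $I = J^m R$ with $J$ an arbitrary nonzero ideal and $R\in\idealres$. To see this, write $I=\prod \mathfrak p^{e_\mathfrak p}$ with each $e_\mathfrak p\ge m$. Each such $e$ can be written uniquely as $e = m q + r$ with $q\geq 0$ and either $r=0$ or $m+1\le r\le 2m-1$. Explicitly: if $m\mid e$ take $r=0$; otherwise take $r\equiv e \pmod m$ in the range $[m+1,2m-1]$, which is possible since $e\ge m+1$ in that case.

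Under this factorisation, $[I]=m[J]+[R]$. So $I$ lies in class $c$ exactly when $[R]\in c+m\Cl(K)$ and $[J]$ lies in a prescribed coset of $\Cl(K)[m]$ inside $\Cl(K)$: namely the set of $d$ with $md = c-[R]$, which has $\#\Cl(K)[m]$ elements. Hence
\[
N_{m,c}(x)=\sum_{\substack{R\in\idealres\\ [R]\in c+m\Cl(K)}}\ \sum_{d:\, md=c-[R]} A_d\bigl((x/N(R))^{1/m}\bigr),
\]
where $A_d(y)$ denotes the number of ideals of class $d$ with norm at most $y$.

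For the inner count I use the classical estimate (Weber/Landau) $A_d(y)=\frac{\rho_K}{h_K}y+O(y^{1-1/d_K})$ for $y\ge 1$, together with $A_d(y)=0$ for $y<1$. Substituting gives the main term
\[
x^{1/m}\frac{\rho_K\#\Cl(K)[m]}{h_K}\sum_{R,\,N(R)\le x}N(R)^{-1/m}
\]
plus an error $O\bigl(\sum_{R\in\idealres,\,N(R)\le x}(x/N(R))^{(d_K-1)/(md_K)}\bigr)$.

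Completing the main-term sum to all of $\idealres$ costs $x^{1/m}\sum_{N(R)>x}N(R)^{-1/m}$. Both this tail and the error need the distribution of $\idealres$, which I get by comparing with the Euler product $\sum_{R\in\idealres}N(R)^{-s}=\prod_\mathfrak p\bigl(1+\sum_{i=m+1}^{2m-1}N\mathfrak p^{-is}\bigr)$. Since $N\mathfrak p^{-is}$ with $i\geq m+1$ gives convergence for $\Re s>1/(m+1)$, this product is $\zeta_K((m+1)s)\,G(s)$ with $G$ absolutely convergent for $\Re s>1/(m+2)$. It follows that $\#\{R\in\idealres: N(R)\le t\}\ll t^{1/(m+1)}$. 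Partial summation then bounds the tail by $x^{1/m}\cdot x^{1/(m+1)-1/m}=x^{1/(m+1)}$.

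The error sum is $x^{\beta}\sum_{N(R)\le x}N(R)^{-\beta}$ with $\beta=(d_K-1)/(md_K)$, and by partial summation there are three cases:
\begin{itemize}
\item If $\beta<1/(m+1)$, i.e.\ $d_K<m+1$, the sum is $\ll x^{1/(m+1)-\beta}$, giving a total error $x^{1/(m+1)}$.
\item If $\beta=1/(m+1)$, i.e.\ $d_K=m+1$, the sum is $\ll\log x$, giving $x^{1/(m+1)}\log x$.
\item If $\beta>1/(m+1)$, the sum converges, giving $x^\beta$.
\end{itemize}
These match $\nu$ and $\mu$ in the statement. The contributions with $N(R)\le x$ but $(x/N(R))^{1/m}$ near $1$ are also covered, since $A_d(y)\ll y$ uniformly.

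The main care point is exactly this error bookkeeping, together with the uniqueness of the decomposition (in particular the choice $r\in\{0\}\cup[m+1,2m-1]$, which is what makes the residue set $\idealres$ the right one). Beyond that, the argument only needs the standard uniform ideal-class counting error $O(y^{1-1/d_K})$, which I take as known.
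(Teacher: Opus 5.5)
Your proposal is correct and follows essentially the same route as the paper. It uses the factorisation $I=J^mR$ with $R\in\idealres$, the class-counting estimate $\frac{\rho_K}{h_K}y+O(y^{1-1/d_K})$ for the $m$-th power part (which produces the factor $\#\Cl(K)[m]$), and the bound $\#\{R\in\idealres: N(R)\le t\}\ll t^{1/(m+1)}$. The only differences are that you carry out the convolution by hand via partial summation, and you obtain the $\idealres$ bound from $\zeta_K((m+1)s)G(s)$. The paper instead packages the convolution into the Bateman--Grosswald lemma (Proposition~\ref{prop:bateman-grosswald}) and gets the $\idealres$ bound from a Tauberian theorem applied to the majorant $\prod_{i=1}^{m-1}\zeta_K((m+i)s)$ (Proposition~\ref{prop:bounds}).
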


Theorem \ref{thm} is proven by comparing the Dirichlet series associated to $N_{m,c}(x)$ to the Dirichlet series that detects ideals that are $m$-th powers and lie in the ideal class $c$.
If $K=\mathbb Q$, Theorem \ref{thm} recovers \eqref{eq:m-full integers}. 
A direct generalisation of \eqref{eq:m-full integers} to ideals is given by the following corollary.

\begin{cor}\label{cor}
Let $K$ be a number field, and let $m$ be a positive integer.
For all $x\in\RR_{>0}$, let $N_{m}(x)$ be the number of $m$-full ideals of $\OO_K$ of norm up to $x$. Then
$$
N_{m}(x) = x^{\frac 1m} \rho_K \prod_{\mathfrak p}\left(1+\sum_{i=m+1}^{2m-1}N(\mathfrak p)^{-\frac im}\right) + O(x^{\nu}\log(x+1)^\mu),
$$
with $\rho_K$, $h_K$, $\nu$ and $\mu$ as in Theorem \ref{thm}.
\end{cor}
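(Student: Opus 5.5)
The plan is to deduce the corollary from Theorem \ref{thm} by summing over all ideal classes, and then to rewrite the main-term constant as an Euler product. The case $m=1$ is handled separately. There every nonzero ideal is $1$-full, so $N_1(x)$ counts all ideals of norm at most $x$. The empty sum gives the product $1$, and $\nu=\frac{d_K-1}{d_K}$ (also when $d_K\le 2$, where $\nu=\frac12=\frac{d_K-1}{d_K}$ for $d_K=2$, and $\nu=\frac12$ beats $0$ for $d_K=1$). Hence the claim for $m=1$ is the classical Landau estimate $x\rho_K+O(x^{1-1/d_K})$, up to the harmless log factor, and we assume $m\ge2$ from now on.

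For $m\ge2$, write $N_m(x)=\sum_{c\in\Cl(K)}N_{m,c}(x)$ and apply Theorem \ref{thm} to each of the $h_K$ classes. The error terms add up to $O(x^\nu(\log(x+1))^\mu)$. In the main term, exchange the order of summation. A given $I\in\idealres$ satisfies $[I]\in c+m\Cl(K)$ exactly when $c\in[I]+m\Cl(K)$, and there are $\#m\Cl(K)=h_K/\#\Cl(K)[m]$ such classes $c$. The factors $\#\Cl(K)[m]/h_K$ therefore cancel, and the main term becomes $x^{1/m}\rho_K\sum_{I\in\idealres}N(I)^{-1/m}$. Everything here converges absolutely, because $N(I)\ge N(\mathfrak p)^{m+1}$ prime by prime, so $\sum_{I\in\idealres} N(I)^{-1/m}\le\prod_{\mathfrak p}(1+O(N(\mathfrak p)^{-(m+1)/m}))$.

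It remains to show that $\sum_{I\in\idealres}N(I)^{-1/m}=\prod_{\mathfrak p}\bigl(1+\sum_{i=m+1}^{2m-1}N(\mathfrak p)^{-i/m}\bigr)$. By unique factorisation, $\idealres$ consists exactly of the ideals $\prod\mathfrak p^{e_\mathfrak p}$ with every exponent $e_\mathfrak p\in\{0\}\cup\{m+1,\dots,2m-1\}$. Multiplicativity of the norm together with absolute convergence then gives the Euler product factorisation directly.

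The only step needing any care is the counting of classes $c$ with $[I]\in c+m\Cl(K)$. It uses $\#m\Cl(K)\cdot\#\Cl(K)[m]=h_K$, which follows from the exact sequence $0\to\Cl(K)[m]\to\Cl(K)\xrightarrow{\cdot m} m\Cl(K)\to0$. With that in place, the argument is routine.
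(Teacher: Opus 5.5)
Your proposal is correct and follows the paper's proof. Both sum Theorem~\ref{thm} over the $h_K$ classes and swap the order of summation, so that each $I\in\idealres$ is counted $\#m\Cl(K)=h_K/\#\Cl(K)[m]$ times; both then factor $\sum_{I\in\idealres}N(I)^{-1/m}$ as an Euler product using multiplicativity of $\mathds 1_{\idealres}$. Your separate treatment of $m=1$ via the classical count of ideals is a useful extra, since Theorem~\ref{thm} is stated only for $m\ge 2$ and the paper does not address this case explicitly. Note that what you actually use there is $\nu\ge\frac{d_K-1}{d_K}$, not equality, because equality fails for $d_K=1$.
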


We say that an algebraic integer $\alpha\in \OO_K$ is $m$-full if so is the ideal $\alpha\OO_K$.
We recall that the 
height of an algebraic integer $\alpha\in \OO_K$ is $H(\alpha)=\prod_{v\in\Omega_{\infty}}\max\{1,|\alpha|_v\}$, where the product runs over the set $\Omega_{\infty}$ of infinite places of $K$ and $|\cdot |_v$ is normalised as in Section \ref{sec:notation}.
As second result we prove an asymptotic formula for the number of $m$-full algebraic integers of bounded height.

\begin{theorem}\label{thm:algebraic}
Let $K$ be a number field, and let $m$ be a positive integer. Let $\mu_K$ be the group of roots of unity in $K$, and $R_K$ the regulator of $K$. Let  $r_1$ and $r_2$ be the numbers of real and complex places of $K$, respectively. For all $x\in\RR_{>0}$, let $N'_m(x)$ be the number of $m$-full elements of $\OO_K$ of height at most $x$. Then 
$$
N'_m(x)=
\frac{\#\mu_K\rho_K C_K C_{K,m}}{h_K}  x^{\frac 1m}(\log x)^{r_1+r_2-1} + E
$$
where 
$$
C_K=\begin{cases}
1 & \text{ if } r_1+r_2=1,\\
((r_1+r_2-1)!R_K)^{-1} & \text{ if }r_1+r_2\geq 2,
\end{cases}
$$
$$
C_{K,m} = \begin{cases}
1 & \text{ if } m=1,\\
\#\Cl(K)[m]\sum_{\substack{I\in \idealres\\ [I]\in m\Cl(K)}}N(I)^{-\frac 1m} & \text{ if } m\geq 2,
\end{cases}
$$ 
$$
E= \begin{cases} 
O(x^{\nu}(\log(x+1))^\mu) & \text{ if } r_1+r_2=1, \\
O(x^{\frac 1m}(\log x)^{r_1+r_2-2}\log\log x) & \text{ if } r_1+r_2\geq 2,
\end{cases}
$$
and $\rho_K$, $h_K$, $\Cl(K)$, $\idealres$, $\nu$ and $\mu$ are as in Theorem \ref{thm}.
\end{theorem}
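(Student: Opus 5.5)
The plan is to reduce counting $m$-full algebraic integers of bounded height to counting $m$-full \emph{principal} ideals, weighted by the number of generators of bounded height. Fix a system of fundamental units and consider the logarithmic embedding $\ell\colon K^\times\to\RR^{r_1+r_2}$, $\ell(\alpha)=(\log|\alpha|_v)_v$. The unit lattice $\Lambda=\ell(\OO_K^\times)$ is a full lattice in the trace-zero hyperplane $W$. Since $\#\mu_K$ elements share the same image under $\ell$, the count becomes
$$
N'_m(x)=\#\mu_K\sum_{\substack{I\ m\text{-full}\\ I \text{ principal}}} \#\{\lambda\in\Lambda : H(\alpha_I u_\lambda)\le x\},
$$
where $\alpha_I$ is any generator of $I$ and $u_\lambda$ a unit with $\ell(u_\lambda)=\lambda$. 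Write $t=\log N(I)$. Then $\ell(\alpha_I)=\frac{t}{r_1+r_2}\cdot\mathbf e'+w_I$ with $w_I\in W$, where $\mathbf e'$ accounts for the weights at complex places. The height condition reads $\sum_v\max\{0,y_v\}\le \log x$ for $y=\ell(\alpha_I)+\lambda$. Hence the inner count is the number of points of the translate $w_I+\Lambda$ inside the polytope
$$
P_{t}(X)=\Bigl\{y\in \RR^{r_1+r_2}:\ \textstyle\sum_v y_v=t,\ \sum_v\max\{0,y_v\}\le X\Bigr\}\cap W_t, \qquad X=\log x,
$$
where $W_t$ is the hyperplane $\sum_v y_v=t$.

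\textbf{Case $r_1+r_2=1$.} Here $\Lambda=0$ and $H(\alpha)=N(\alpha\OO_K)$, so $N'_m(x)=\#\mu_K N_{m,[\OO_K]}(x)$. Theorem \ref{thm} with $c$ trivial then gives the claim: the constant matches because $c+m\Cl(K)=m\Cl(K)$ and $C_K=1$. The case $m=1$ follows from the classical count of principal ideals of norm $\le x$.

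\textbf{Case $r_1+r_2\ge2$.} The polytope $P_t(X)$ is a dilate-and-shift of a fixed polytope in $W$, and its $(r_1+r_2-1)$-dimensional volume is $\frac{(X-t)^{r_1+r_2-1}}{(r_1+r_2-1)!}$ up to the normalisation constant relating Lebesgue measure on $W$ to the covolume $R_K$ (with the standard normalisation, the covolume of $\Lambda$ is $R_K$ times a fixed factor). Lattice point counting, for instance via Davenport's lemma or a simple boundary estimate, gives
$$
\#\bigl((w_I+\Lambda)\cap P_t(X)\bigr)=\frac{(X-t)^{r_1+r_2-1}}{(r_1+r_2-1)!\,R_K}+O\bigl((X-t+1)^{r_1+r_2-2}\bigr),
$$
uniformly in $w_I$, for $0\le t\le X$.

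Summing over $I$ and writing $A(u)=N_{m,[\OO_K]}(u)$, I use Theorem \ref{thm} in the form $A(u)=cu^{1/m}+O(u^{\nu})$, with $c=\rho_K C_{K,m}/h_K$, together with Stieltjes integration:
$$
\sum_{N(I)\le x} (\log x-\log N(I))^{k}=\int_1^x (\log(x/u))^k\,dA(u).
$$
The main term equals $c\int_1^x(\log (x/u))^k\frac1m u^{1/m-1}du$, which is $c\,x^{1/m}\,k!\,m^{k}$ up to lower-order terms. This produces $x^{1/m}$ times a constant, \emph{not} times $(\log x)^{r_1+r_2-1}$. So this naive sum must be reexamined, and this is the step I expect to be the main obstacle.

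The discrepancy arises because a principal ideal of small norm has many generators of height up to $x$, but the count is dominated by ideals with $N(I)$ comparable to $x$. To recover the stated power of $\log x$, one must notice that the height condition does \emph{not} force $N(I)\le x$ in the relevant sense. Indeed, $H(\alpha)\ge N(\alpha)$ always holds, so the issue is genuine. I would therefore carefully recompute the volume of the region $\{y:\sum y_v=t,\ \sum\max(0,y_v)\le X\}$. Since the coordinates $y_v$ may be negative, this region is \emph{not} bounded by $X-t$ alone, and its volume behaves like a polynomial in $X$ and $t$ of degree $r_1+r_2-1$, with leading contribution of order $X^{r_1+r_2-1}$ when $t$ is small. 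Redoing the Stieltjes integration with this polynomial yields a main term $\frac{c}{(r_1+r_2-1)!R_K}x^{1/m}(\log x)^{r_1+r_2-1}$ plus lower powers of $\log x$.

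The error term $O(x^{1/m}(\log x)^{r_1+r_2-2}\log\log x)$ has to come from two sources. The first is the boundary error of the lattice count, summed over $I$ as above. The second is the range where $t$ is close to $X$, which I would split off at $N(I)\le x/(\log x)^{C}$; this split is where the factor $\log\log x$ arises.
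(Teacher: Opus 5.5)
You have the same overall strategy as the paper: sum over $m$-full principal ideals of the number of points of $w_I+\Lambda$ in the height region, counted via Davenport. Your case $r_1+r_2=1$ is correct. The gap is at exactly the step you flag for $r_1+r_2\ge 2$. You never determine the volume of $P_t(X)$, so the constant $\frac 1{(r_1+r_2-1)!}$ in your final main term is asserted, not derived.

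Write $n=r_1+r_2-1$. Your first formula $\frac{(X-t)^n}{n!}$ is wrong. Also, $P_t(X)$ is not a dilate-and-shift of a fixed polytope, since its shape depends on $t/X$. Project away one place, so that $\pi(\Lambda)$ has covolume $R_K$. The region becomes $\{t-X\le \sum_{v\in S}a_v\le X \ \forall S\neq\emptyset\}$. Splitting into orthants by the signs of the $a_v$, each piece is a product of simplices $X\Delta^k\times(X-t)\Delta^{n-k}$. Hence the volume is $Q_n(X,t)=\sum_{k=0}^n\binom{n}{k}\frac{X^k(X-t)^{n-k}}{k!(n-k)!}$, which is Lemma \ref{lemma:volume polytope}.

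Your heuristic also points to the wrong regime. At $t=0$ the volume is $\binom{2n}{n}\frac{X^n}{n!}$, which would give the wrong constant. The relevant value is at $t=X$. There $\sum_v\max\{0,y_v\}\ge\sum_v y_v=X$ forces all $y_v\ge 0$, so the region is the full simplex of volume $\frac{X^n}{n!}$, whereas your formula gives $0$. The extra $(\log x)^n$ does not come from $N(I)$ exceeding $x$, which $N(\alpha)\le H(\alpha)$ forbids. It comes from generators with some $|\alpha|_v<1$.

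The second missing ingredient is why only $t$ near $X$ matters. Because $N_{m,[\OO_K]}(u)\ll u^{1/m}$, one has $\log x-\log N(I)=O(1)$ on average over $m$-full principal $I$ with $N(I)\le x$. The paper uses this by writing $Q_n(X,t)=X^nQ_n(1,1)+O(X^{n-1}(X-t))$ and splitting at $N(I)=x/(\log x)^m$; that split is where its $\log\log x$ comes from.

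Once you have $Q_n$, your Stieltjes idea handles this more cleanly. $Q_n$ is already a polynomial in $X$ and $X-t$ whose pure $X^n$ coefficient is $\frac 1{n!}$. For $j\ge 1$, partial summation gives $\sum_{N(I)\le x}(\log(x/N(I)))^j=\int_1^x j(\log(x/u))^{j-1}N_{m,[\OO_K]}(u)\,\frac{du}{u}\ll x^{1/m}$. So you would even get the error $O(x^{1/m}(\log x)^{n-1})$ without the $\log\log x$ factor.

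Finally, your uniform lattice-point error $O((X-t+1)^{n-1})$ is not valid for the true region, whose coordinate projections have volume $\asymp X^{\dim V}$. The correct bound $O(X^{n-1})$ per ideal still suffices after summing over $\ll x^{1/m}$ ideals.
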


The number $N'_m(x)$ has a geometric interpretation as the cardinality of the set of Campana $\OO_K$-points of Weil height at most $x$ on the dlt Campana pair $(\PP^1_K, (1-\frac 1m)\{0\}+\{\infty\})$ in the sense of \cite{PSTVA21}.

The proof of Theorem \ref{thm:algebraic} combines Theorem \ref{thm} with estimates for the number of algebraic integers of fixed norm and bounded height that we provide in Proposition \ref{prop:integers of given norm}. 

The special case $m=1$ of Theorem \ref{thm:algebraic} recovers a known asymptotic formula for algebraic integers of bounded height. For this, a claim  without explicit leading constant can be found in \cite[Theorem 3.5.2]{Lan83}. We refer to \cite{Bar14, Wid16, GG17} for the history of this special case and its generalisations. 

\subsection{Notation}
\label{sec:notation}
Let $m$ be a positive integer.
Let $K$ be a number field, 
and $\mathcal O_K$ its ring of integers. 
Let $\mathcal I$ be the set of nonzero ideals of $\mathcal O_K$, 
and $\mathcal P$ the set of nonzero prime ideals of $\mathcal O_K$. 
For $I\in\mathcal I$, we denote by $N(I)$ the ideal norm of $I$, 
and by $[I]$ the ideal class of $I$.
Let $\Cl(K)$ be the class group of $K$.
Let $\Cl(K)[m]$ be the $m$-torsion subgroup of $\Cl(K)$ and $m\Cl(K)$ the image of the multiplication by $m$ endomorphism of $\Cl(K)$.
The residue of the Dedekind zeta function $\zeta_K(s)$ of $K$ at $s=1$ is 
$$
\rho_K=\frac{2^{r_1}(2\pi)^{r_2}R_Kh_K}{\#\mu_K|\Delta_K|^{\frac 12}},
$$
where $r_1$ and $r_2$ are the number of real and complex places of $K$, respectively, $R_K$ is the regulator of $K$, $h_K$ is the class number of $K$, $\mu_K$ is the group of roots of unity in $K$, and $|\Delta_K|$ is the absolute value of the discriminant of $K$.
Let $d_K=[K:\QQ]$.
Let $\Omega_{\infty}$ be the set of infinite places of $K$.
For every place $v$ of $K$ lying over a place $w$ of $\mathbb Q$, let $|\cdot|_v=|N_{K_v/\QQ_w}(\cdot)|_w$, where $K_v$ and $\QQ_w$ are the completions of $K$ and $\QQ$ at $v$ and $w$, respectively, $N_{K_v/\QQ_w}:K_v\to\QQ_w$ is the norm, and $|\cdot|_w$ is the usual $p$-adic or real absolute value on $\QQ$.

We denote by $\prod_{\mathfrak p}$ a product over all $\mathfrak p\in\mathcal P$, and we denote by $\sum_{N(I) \dots}$ a sum over all ideals $I\in\mathcal I$ such that $N(I)\dots$ holds.

For real valued functions $f$ and $g$, we use the symbols $f\ll g$ and $f=O(g)$ to indicate  that there is a positive constant $C$ such that $|f(x)|\leq C g(x)$ for all $x$. The notation $\Re(s)\gg 1$ indicates $\Re(s)\geq C$ for some positive constant $C$ independent of $s$. 

We denote by $\RR_{\geq 0}$ the set of nonnegative real numbers, and for every integer $n$ we denote by $\ZZ_{\geq n}$ the set of integers that are at least $n$.

\section{Counting tool}
\label{sec:counting_tool}
We say that a function $a:\mathcal I\to\CC$ defines a convergent Dirichlet series if the series
$$
\sum_{I\in\mathcal I}a(I)N(I)^{-s}
$$
converges for $s\in\CC$ with $\Re(s)\gg 1$.

\begin{proposition}\label{prop:bateman-grosswald}
Let $a,b,c:\mathcal I\to\CC$ be functions that define convergent Dirichlet series and such that 
\begin{equation}
\label{eq:product}
\left(\sum_{I\in\mathcal I}\frac{a(I)}{N(I)^{s}}\right)\left(\sum_{I\in\mathcal I}\frac{b(I)}{N(I)^{s}}\right)=\sum_{I\in\mathcal I}\frac{c(I)}{N(I)^{s}}
\end{equation}
for all $s\in\CC$ with $\Re(s)\gg 1$.

Assume that 
$$
\sum_{N(I)\leq x}a(I)=\alpha_0x^{\lambda_0}+\dots + \alpha_r x^{\lambda_r} + O(x^{\lambda}\log^{\mu}(x+1)) \quad \text{ and } \quad \sum_{N(I)\leq x}|b(I)| = O(x^\nu)
$$
with $\lambda, \mu, \nu \in\RR_{\geq 0}$, $\alpha_0,\dots,\alpha_r, \lambda_0,\dots,\lambda_r\in\CC$.
Then 
$$
\sum_{N(I)\leq x}c(I) = \gamma_0x^{\lambda_0}+\dots + \gamma_r x^{\lambda_r} +O(x^{\max(\lambda,\nu)}(\log(x+1))^{\mu'}),
$$
where for $i\in\{0,\dots,r\}$,
$$
\gamma_i = \begin{cases}
\alpha_i \sum_{I\in\mathcal I} b(I)N(I)^{-\lambda_i} & \text{ if } \Re(\lambda_i)>\nu,\\
0 & \text{ otherwise,}
\end{cases}
$$
and
$$
\qquad \mu'=\begin{cases}
\mu & \text{ if }\lambda>\nu,\\
\mu+1 & \text{ if }\lambda = \nu,\\
1 & \text{ if } \lambda <\nu \text{ and } \Re(\lambda_i) = \nu \text{ for some } i\in\{0,\dots,r\},\\
0 & \text{ otherwise.} 
\end{cases}
$$
\end{proposition}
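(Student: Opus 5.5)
Since $\mathcal I$ is a multiplicative monoid and the norm is multiplicative, identity \eqref{eq:product} together with uniqueness of Dirichlet series coefficients (both sides converge absolutely for $\Re(s)\gg1$, and the coefficients depend only on the multiset of values $N(I)$) lets us assume the Dirichlet convolution $c(I)=\sum_{JL=I}a(J)b(L)$. Strictly, uniqueness applies to the grouped coefficients $\sum_{N(I)=n}c(I)$, but those are all that enter $\sum_{N(I)\le x}c(I)$. So it suffices to work with
$$
C(x):=\sum_{N(I)\leq x}c(I)=\sum_{N(L)\leq x} b(L)\,A\!\left(\frac{x}{N(L)}\right),\qquad A(y):=\sum_{N(J)\le y}a(J),
$$
and insert the assumed asymptotic for $A$. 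The plan is the classical hyperbola-free argument of Bateman–Grosswald: plug in the expansion of $A$ term by term and control the resulting sums over $L$ with partial summation using only $B(y):=\sum_{N(L)\le y}|b(L)|=O(y^\nu)$.

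\textbf{Main terms.} The expansion gives
$$
C(x)=\sum_{i=0}^r\alpha_i x^{\lambda_i}\sum_{N(L)\le x}b(L)N(L)^{-\lambda_i}+O\Bigl(x^\lambda\sum_{N(L)\le x}|b(L)|N(L)^{-\lambda}\log^\mu(x+1)\Bigr).
$$
This uses $A(y)=0$ for $y<1$ and that the expansion, with a suitable constant, holds for all $y\ge1$. Here $\log(y+1)\le\log(x+1)$.

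For each $i$ with $\Re\lambda_i>\nu$, partial summation against $B$ shows that $\sum_L b(L)N(L)^{-\lambda_i}$ converges absolutely. It also gives the tail bound $\sum_{N(L)>x}|b(L)|N(L)^{-\Re\lambda_i}\ll x^{\nu-\Re\lambda_i}$. Hence the $i$-th term equals $\gamma_i x^{\lambda_i}+O(x^\nu)$.

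If $\Re\lambda_i<\nu$, partial summation gives $\sum_{N(L)\le x}|b(L)|N(L)^{-\Re\lambda_i}\ll x^{\nu-\Re\lambda_i}$, so the term is $O(x^\nu)$. If $\Re\lambda_i=\nu$, the same computation gives $\ll\log(x+1)$, so the term is $O(x^\nu\log(x+1))$. This is where the exponent $\mu'=1$ arises in the third case.

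\textbf{Error term.} By partial summation,
$$
\sum_{N(L)\le x}|b(L)|N(L)^{-\lambda}\ll 1+\int_1^x y^{\nu-\lambda-1}\,dy .
$$
This is $O(1)$ if $\lambda>\nu$, $O(\log(x+1))$ if $\lambda=\nu$, and $O(x^{\nu-\lambda})$ if $\lambda<\nu$. Multiplying by $x^\lambda\log^\mu(x+1)$ gives respectively $x^\lambda\log^\mu$, $x^\nu\log^{\mu+1}$, and $x^\nu\log^\mu$.

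The last case does not obviously match the claimed $\mu'\in\{0,1\}$ when $\lambda<\nu$. I expect this bookkeeping to be the main obstacle. To handle it, I would split $L$ at $N(L)\le x/2$ versus $x/2<N(L)\le x$, or more carefully bound $\log(x/N(L)+1)$ by $\log(x+1)$ only where needed.

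Concretely, the terms with $N(L)$ near $x$ contribute $\sum |b(L)|\,|A(x/N(L))|$ with $x/N(L)$ bounded, hence $O(B(x))=O(x^\nu)$. In the remaining range, $\log(x/N(L)+1)^\mu$ is integrated against $y^{\nu-\lambda-1}$, which with the substitution $y\to x/y$ gives $x^{\nu-\lambda}\int_1^x t^{\lambda-\nu-1}\log^\mu(t+1)\,dt\ll x^{\nu-\lambda}$ since $\lambda<\nu$. So the log factor disappears, giving $O(x^\nu)$ in that case.

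Collecting all contributions yields exactly the stated $\gamma_i$ and exponent $\mu'$. The terms with $\Re\lambda_i\le\nu$ are absorbed into the error with at most one log, and only when $\Re\lambda_i=\nu$.
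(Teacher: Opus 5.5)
Your argument is correct, but the paper proves the proposition differently. The paper's proof is a pure reduction. It sets $a_n=\sum_{N(I)=n}a(I)$, $b_n=\sum_{N(I)=n}b(I)$ and $c_n=\sum_{N(I)=n}c(I)$, and observes three things:
\begin{itemize}
\item \eqref{eq:product} becomes an identity of ordinary Dirichlet series;
\item $\sum_{n\le x}a_n$ and $\sum_{n\le x}c_n$ coincide with the ideal sums;
\item $\sum_{n\le x}|b_n|\le\sum_{N(I)\le x}|b(I)|$.
\end{itemize}
It then quotes \cite[Lemma 1]{BG58} as a black box.

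You make the same grouping, to extract $c_n=\sum_{jl=n}a_jb_l$. You then reprove the content of that lemma yourself, via $C(x)=\sum_{N(L)\le x}b(L)A(x/N(L))$ and partial summation against $B(y)\ll y^\nu$.

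The paper's version is shorter and delegates all the case bookkeeping to the published lemma. Yours is self-contained and shows where each case of $\mu'$ comes from. In particular, for $\lambda<\nu$ you must keep the factor $\log^\mu(x/N(L)+1)$ rather than bound it by $\log^\mu(x+1)$. That is what makes the error term of $A$ contribute only $O(x^\nu)$, so the single logarithm in that case comes solely from indices with $\Re(\lambda_i)=\nu$.

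The split at $x/2$ in that step is unnecessary. The weight $y\mapsto (x/y)^\lambda\log^\mu(x/y+1)$ is decreasing on $[1,x]$, so monotone partial summation with $B(y)\ll y^\nu$ directly bounds the error contribution by the boundary term $x^\lambda\log^\mu(x+1)\ll x^\nu$ plus
\[
x^\nu\int_1^x t^{\lambda-\nu-1}\log^\mu(t+1)\,dt\ll x^\nu .
\]
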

\begin{proof}

For all positive integers $n$, let 
$$a_n=\sum_{N(I)=n}a(I), \quad b_n=\sum_{N(I)=n}b(I), \quad c_n=\sum_{N(I)=n}c(I).$$ 
By \eqref{eq:product} we have $\left( \sum_{n\geq 1} a_n n^{-s}\right)\left(\sum_{n\geq 1}b_n n^{-s} \right) = \sum_{n\geq 1} c_n n^{-s}$ for $s\in\CC$ with $\Re(s)\gg 1$. Moreover,
$$\sum_{n\leq x}a_n = \sum_{N(I)\leq x}a(I), \quad \sum_{n\leq x}|b_n|\leq \sum_{N(I)\leq x}|b(I)|, \quad \sum_{n\leq x}c_n = \sum_{N(I)\leq x}c(I).
$$
Then we conclude by \cite[Lemma 1]{BG58}.
\end{proof}

\section{$m$-full ideals in a fixed ideal class}

For every ideal $I\in\mathcal I$ and every prime ideal $\mathfrak p\in\mathcal P$, we define $v_\mathfrak p(I)\in\ZZ_{\geq 0}$ by 
$I=\prod_{\mathfrak p}\mathfrak p^{v_{\mathfrak p}(I)}$.
Then
$$
\idealres = \{I\in\mathcal I: v_{\mathfrak p}(I)\in\{0
\}\cup\{m+1,\dots,2m-1\} \ \forall \mathfrak p\in\mathcal P\}.
$$
The set of $m$-full ideals and the set of ideals that are $m$-th powers are
\begin{gather*}
\idealfull = \{I\in\mathcal I: v_{\mathfrak p}(I)\in\{0\}\cup \ZZ_{\geq m} \ \forall \mathfrak p\in\mathcal P\},\\
\mathcal I_m=\{I\in\mathcal I: m\mid v_{\mathfrak p}(I) \ \forall \mathfrak p\in\mathcal P\}, 
\end{gather*}
respectively.
By the unique factorization of ideals in Dedekind domains, there are bijections
\begin{gather}
 \mathcal I \to \mathcal I_m, \qquad J\mapsto J^m, \label{eq:bij_power}
\\ \idealres \times \mathcal I_m \to \idealfull, \qquad (I,J)\mapsto IJ. \label{eq:bij_full}
\end{gather}

For every subset $S\subseteq \mathcal I$ and every ideal class $c\in\Cl(K)$, we denote by $\mathds 1_{S}$ the characteristic function of $S$ and by $\mathds 1_{S,c}$ the characteristic function of the set $\{I\in S: [I]=c\}$.
Then 
\begin{equation}\label{eq:number_full}
N_{m,c}(x)=\sum_{N(I)\leq x}\mathds 1_{\idealfull,c}(I).
\end{equation}

Before proving Theorem \ref{thm}, we recall some estimates for $m$-th powers and  residues in a given ideal class.
\begin{proposition}\label{prop:bounds}
Let $m$ be a positive integer.
Let $c\in\Cl(K)$. For $x\in\RR_{\geq 0}$,
\begin{gather*}
\sum_{N(I)\leq x}\mathds 1_{\mathcal I_m, c}(I)=\frac{\rho_K\#\Cl(K)[m]}{h_K} x^{\frac 1m}  +O\left(x^{\frac{d_K-1}{md_K}}\right),\\
\sum_{N(I)\leq x}\mathds 1_{\idealres,c}(I) = O(x^{\frac 1{m+1}}).
\end{gather*}
\end{proposition}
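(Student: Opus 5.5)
The plan is to handle the two estimates separately, the first via the bijection \eqref{eq:bij_power} and the classical count of ideals in an ideal class, the second via the Dirichlet series of $\mathds 1_{\idealres}$ together with Proposition \ref{prop:bateman-grosswald}.

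\emph{The $m$-th powers.} By \eqref{eq:bij_power}, $J\mapsto J^m$ is a bijection from $\mathcal I$ to $\mathcal I_m$, and it satisfies $N(J^m)=N(J)^m$ and $[J^m]=m[J]$. Hence
$$\sum_{N(I)\leq x}\mathds 1_{\mathcal I_m,c}(I)=\sum_{\substack{c'\in\Cl(K)\\ mc'=c}}\#\{J\in\mathcal I: [J]=c',\ N(J)\leq x^{1/m}\}.$$
The set $\{c': mc'=c\}$ is empty if $c\notin m\Cl(K)$. Otherwise it is a coset of $\Cl(K)[m]$, so it has $\#\Cl(K)[m]$ elements.

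For the inner count I use the classical estimate (Weber/Landau) for ideals in a fixed class:
$$\#\{J:[J]=c',\ N(J)\leq y\}=\frac{\rho_K}{h_K}y+O\bigl(y^{1-1/d_K}\bigr).$$
This comes from counting lattice points of a fixed fractional ideal in a homogeneously expanding region of $K\otimes\RR$ cut out by a fundamental domain for the units. The region has Lipschitz-parametrisable boundary, so Davenport's lemma gives the error $O(y^{(d_K-1)/d_K})$. Substituting $y=x^{1/m}$ gives main term $\frac{\rho_K\#\Cl(K)[m]}{h_K}x^{1/m}$ and error $O(x^{(d_K-1)/(md_K)})$.

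This only holds when $c\in m\Cl(K)$; if $c\notin m\Cl(K)$ the sum is $0$. The statement as written seems to tacitly assume $c\in m\Cl(K)$, or is meant with the convention that the main term is included exactly when $c\in m\Cl(K)$. I will state this caveat explicitly. In the proof of Theorem \ref{thm} the formula is applied to classes of the form $c-[I]$ with $[I]\in c+m\Cl(K)$, which lie in $m\Cl(K)$, so the caveat is harmless there.

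\emph{The set $\idealres$.} First drop the class condition, since $\mathds 1_{\idealres,c}\leq \mathds 1_{\idealres}$. The function $\mathds 1_{\idealres}$ is multiplicative, with Dirichlet series
$$\prod_{\mathfrak p}\Bigl(1+\sum_{i=m+1}^{2m-1}N(\mathfrak p)^{-is}\Bigr)=\zeta_K((m+1)s)\,G(s).$$
Here $G(s)=\prod_{\mathfrak p}\bigl(1+\sum_{i=m+1}^{2m-1}N(\mathfrak p)^{-is}\bigr)\bigl(1-N(\mathfrak p)^{-(m+1)s}\bigr)$. Expanding, each Euler factor of $G$ is $1+O(N(\mathfrak p)^{-(m+2)\sigma})$, so $G$ is given by a Dirichlet series $\sum g(I)N(I)^{-s}$ converging absolutely for $\Re(s)>1/(m+2)$. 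For $m=1$ the set $\idealres=\{\OO_K\}$ and the bound is trivial, so assume $m\geq 2$.

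Next set $a=\mathds 1_{\mathcal I_{m+1}}$ and $b=g$ in Proposition \ref{prop:bateman-grosswald}. By the first part with $m+1$ in place of $m$, summed over all classes (or by the ideal theorem directly), $\sum_{N(I)\le x}a(I)=\rho_K x^{1/(m+1)}+O(x^{\lambda})$ with $\lambda<1/(m+1)$. Absolute convergence of $G$ on $\Re(s)>1/(m+2)$ gives $\sum_{N(I)\leq x}|g(I)|=O(x^{\nu})$ for any $\nu$ with $1/(m+2)<\nu<1/(m+1)$. The proposition then yields $\sum_{N(I)\le x}\mathds 1_{\idealres}(I)=\gamma_0x^{1/(m+1)}+O(x^{\max(\lambda,\nu)})=O(x^{1/(m+1)})$.

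Alternatively, the second estimate can be done elementarily: each $I\in\idealres$ factors uniquely as $A^{m+1}B$ with $B$ in a set whose series converges at $s=1/(m+2)$, and one sums $\#\{A:N(A)\le (x/N(B))^{1/(m+1)}\}\ll (x/N(B))^{1/(m+1)}$. Either route works.

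\emph{Main obstacle.} The delicate step is the lattice-point error term $O(y^{1-1/d_K})$ for ideals in a class. This is standard and I would cite it (e.g.\ Lang, or Murty--Van Order) rather than reprove it. The remaining steps are bookkeeping.
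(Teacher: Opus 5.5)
Your proposal is correct. The first estimate is proved exactly as in the paper: the bijection $J\mapsto J^m$, followed by the classical count of ideals of bounded norm in a fixed class.

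Your caveat is a real point that the paper passes over. For $c\notin m\Cl(K)$ the left-hand side vanishes, so the formula holds as stated only for $c\in m\Cl(K)$. That is the only case used in the proof of Theorem \ref{thm}, so nothing downstream is affected.

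For the second estimate you take a genuinely different route. The paper never computes the generating series of $\mathds 1_{\idealres}$. It dominates it coefficientwise by $\prod_{i=1}^{m-1}\zeta_K((m+i)s)$, which has nonnegative coefficients and a simple pole at $s=\frac 1{m+1}$, and then quotes an external Tauberian theorem. You instead factor the exact series as $\zeta_K((m+1)s)G(s)$, check that $G$ converges absolutely for $\Re(s)>\frac1{m+2}$, and reuse Proposition \ref{prop:bateman-grosswald} together with Rankin's trick.

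The trade-off is as follows. The paper's majorant avoids any Euler-factor bookkeeping, but it needs an outside Tauberian input and yields only an upper bound. Your argument stays within the tools of Section \ref{sec:counting_tool}. It also gives more: an asymptotic $\rho_K G(\frac1{m+1})x^{\frac1{m+1}}+O(x^{\theta})$ for some $\theta<\frac1{m+1}$.

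There are two small slips, neither affecting correctness.
\begin{itemize}
\item Proposition \ref{prop:bateman-grosswald} can produce an extra factor $\log(x+1)$ when $\lambda=\nu$. This is harmless, or you can simply choose $\nu\neq\lambda$.
\item In the elementary alternative, the right decomposition is $I=A^{m+1}B$ with $A$ squarefree and coprime to $B$, and $v_{\mathfrak p}(B)\in\{0\}\cup\{m+2,\dots,2m-1\}$. The series over such $B$ converges at $s=\frac1{m+1}$, which is what you need. It diverges at $s=\frac1{m+2}$ once $m\geq 3$, so your stated convergence point is wrong.
\end{itemize}
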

\begin{proof}
We prove first the asymptotic formula. By the bijection \eqref{eq:bij_power}, writing $I=J^m$ with $J\in\mathcal I$ gives
\begin{gather*}
\sum_{N(I)\leq x}\mathds 1_{\mathcal I_m,c}(I) 
= \sum_{\substack{c'\in\Cl(K)\\ mc'=c}}\sum_{\substack{N(J)\leq x^{\frac 1m}\\ [J]=c'}}1.
\end{gather*}
By \cite[Theorem VI.3.3]{Lan94}
the inner sum is 
\begin{equation}\label{eq:ideals of bounded norm}
\frac{\rho_K}{h_K} x^{\frac 1m} +O\left(x^{\frac {d_K-1}{md_K}}\right).
\end{equation} 
Since the leading term is independent of the choice of the ideal class $c'$ we get the asymptotic formula in the statement.

For the upper bound, write $\prod_{i=1}^{m-1}\zeta_K((m+i)s)=\sum_{I\in\mathcal I}a(I)N(I)^{-s}$. 
For all prime ideals $\mathfrak p\in\mathcal P$ and all $e\in\ZZ_{\geq 0}$ we have 
$$a(\mathfrak p^e)
=\#\{(e_1,\dots,e_{m-1})\in\ZZ_{\geq 0}^{m-1}:e_1(m+1)+\dots+e_{m-1}(2m-1)=e\}. 
$$
Thus $a(\mathfrak p^e)\geq \mathds 1_{\idealres}(\mathfrak p^{e})$, and 
$$
a(I) = \prod_{\mathfrak p}a(\mathfrak p^{v_{\mathfrak p(I)}})\geq \prod_{\mathfrak p}\mathds 1_{\idealres}(\mathfrak p^{v_{\mathfrak p(I)}})=\mathds 1_{\idealres}(I)
$$ 
for all $I\in\mathcal I$.
Then
$$
\sum_{N(I)\leq x}\mathds 1_{\idealres,c}(I) \leq\sum_{N(I)\leq x}\mathds 1_{\idealres}(I) \leq\sum_{N(I)\leq x}a(I).
$$
Since $\prod_{i=1}^{m-1}\zeta_K((m+i)s)$ is a Dirichlet series with nonnegative coefficients, holomorphic for $\Re(s)>\frac 1{m+1}$ with a simple pole at $s=\frac 1{m+1}$ and no other poles on the line $\Re(s)=\frac 1{m+1}$, a Tauberian theorem \cite[Theorem A]{PTBZ25}
gives the desired bound.
\end{proof}

\begin{proof}[Proof of Theorem \ref{thm}]
By the bijection \eqref{eq:bij_full}, every $I\in\idealfull$ can be written uniquely as $I=\widetilde I J$ with $\widetilde I\in \idealres$ and $J\in\mathcal I_m$. Let $\widetilde c=[\widetilde I]$. Then $[J]=[I]-\widetilde c\in m\Cl(K)$. Thus
$$
\sum_{I\in\mathcal I}\frac{\mathds 1_{\idealfull,c}(I)}{N(I)^{s}}=\sum_{\widetilde c\in c+m\Cl(K)}\left(\sum_{\widetilde I\in\mathcal I}\frac{\mathds 1_{\idealres,\widetilde c}(\widetilde I)}{N(\widetilde I)^{s}} \right)\left(\sum_{J\in\mathcal I}\frac{\mathds 1_{\mathcal I_m,c-\widetilde c}(J)}{N(J)^{s}}\right).
$$
An application of Proposition \ref{prop:bateman-grosswald} with $a(I)=\mathds 1_{\mathcal I_m,c-\widetilde c}(I)$ and $b(I)=\mathds 1_{\idealres,\widetilde c}(I)$ and with the estimates in Proposition \ref{prop:bounds} gives
$$
N_{m,c}(x) = x^{\frac 1m}\frac{\rho_K\#\Cl(K)[m]}{h_K}\sum_{\substack{I\in \idealres\\ [I]\in c+m\Cl(K)}}N(I)^{-\frac 1m} + O(x^{\nu}(\log(x+1))^\mu),
$$
where 
$$
\nu = \max\left\{\frac 1{m+1},\frac{d_K-1}{md_K}\right\}
\qquad \text{ and } \qquad
\mu=\begin{cases} 1 & \text{ if } \frac 1{m+1}=\frac{d_K-1}{md_K},\\ 0 & \text{ otherwise.} \end{cases}
$$
We conclude as $\frac 1{m+1}\geq \frac{d_K-1}{md_K}$ if and only if $d_K\leq m+1$.
\end{proof}
\begin{proof}[Proof of Corollary \ref{cor}]
By Theorem \ref{thm} the leading constant is $$
\frac{\rho_K\#\Cl(K)[m]}{h_K}\sum_{I\in \idealres}\sum_{c\in [I]+m\Cl(K)}N(I)^{-\frac 1m} = \rho_K\sum_{I\in \idealres}N(I)^{-\frac 1m}.$$
We conclude by computing the Euler product, as $\mathds 1_{\idealres}$ is a multiplicative function.
\end{proof}

\section{Algebraic integers of given norm}

In order to prove an asymptotic formula for the number of algebraic integers of given norm and bounded height, we need to compute the volume of a certain polytope.

\begin{lemma}\label{lemma:volume polytope}
Let $n$ be a positive integer. For real numbers $0\leq c\leq b$,  let 
$$P_{n,b,c}=\{\underline a\in\RR^n: c-b\leq \sum_{i\in S}a_i\leq b \ \forall S\subseteq \{1,\dots,n\},  S\neq\emptyset\}.$$
Then there is a homogeneous polynomial $Q_n\in\QQ[u,v]$ of degree $n$ such that $$\meas_n(P_{n,b,c})=Q_n(b,c),$$ where $\meas_n$ is the Lebesgue measure on $\RR^n$. In particular,
$$
Q_n(u,v)=\sum_{k=0}^n {n \choose k} \frac{u^k(u-v)^{n-k}}{k!(n-k)!}, \qquad Q_n(1,1)=\frac 1{n!}, \qquad Q_n(b,c)=O(b^n),
$$
and the implicit constant in the upper bound is independent of $c$.

For every coordinate subspace $V\subseteq \RR^n$, let $p_V:\RR^n\to V$ be the orthogonal projection onto $V$.
Then $p_V(P_{n,b,c}) = P_{\dim V,b,c}$.
\end{lemma}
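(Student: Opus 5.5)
The plan is to replace the $2^n-1$ subset constraints by two simple ones. For $\underline a\in\RR^n$ write $a_i^+=\max\{a_i,0\}$ and $a_i^-=\max\{-a_i,0\}$. I claim
$$
P_{n,b,c}=\Big\{\underline a\in\RR^n:\ \sum_{i=1}^n a_i^+\leq b,\ \sum_{i=1}^n a_i^-\leq b-c\Big\}.
$$
\emph{Upper constraints.} If some $a_i>0$, then $\max_{S\neq\emptyset}\sum_{i\in S}a_i=\sum_i a_i^+$, attained at $S=\{i:a_i>0\}$. If all $a_i\leq 0$, this maximum is $\max_i a_i\leq 0\leq b$. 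In both cases the upper constraints hold if and only if $\sum_i a_i^+\leq b$; in the second case we use $b\geq 0$.

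\emph{Lower constraints.} Symmetrically, $\min_{S\neq\emptyset}\sum_{i\in S}a_i=-\sum_i a_i^-$ when some $a_i<0$. When all $a_i\geq 0$, the minimum is nonnegative and hence $\geq c-b$, using $c\leq b$. So the lower constraints hold if and only if $\sum_i a_i^-\leq b-c$.

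Next compute the measure by splitting $\RR^n$, up to a null set, according to the set $T$ of coordinates with $a_i>0$. Put $k=\#T$. On the region with exactly this sign pattern, the conditions decouple into two independent pieces:
\begin{itemize}
\item $(a_i)_{i\in T}$ lies in the scaled standard simplex $\{a_i>0,\ \sum_{i\in T} a_i\leq b\}$, of volume $b^k/k!$;
\item $(-a_i)_{i\notin T}$ lies in the simplex of size $b-c$, of volume $(b-c)^{n-k}/(n-k)!$.
\end{itemize}
The piece for $T$ is therefore a product of two simplices. Summing over the $\binom nk$ choices of $T$ for each $k$ gives $\meas_n(P_{n,b,c})=Q_n(b,c)$ with $Q_n$ as in the statement. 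This is visibly a homogeneous polynomial of degree $n$ with rational coefficients.

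The remaining claims are then immediate.
\begin{itemize}
\item At $u=v=1$ only the term $k=n$ survives, so $Q_n(1,1)=1/n!$.
\item Since $0\leq b-c\leq b$, each term is at most $\binom nk b^n/(k!(n-k)!)$. Hence $Q_n(b,c)\leq b^n\sum_k\binom nk/(k!(n-k)!)$, with a constant independent of $c$.
\end{itemize}

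For the projection statement, let $V$ be spanned by the coordinates in $J\subseteq\{1,\dots,n\}$, and identify $V$ with $\RR^{\#J}$.
\begin{itemize}
\item \emph{$p_V(P_{n,b,c})\subseteq P_{\dim V,b,c}$.} Every nonempty $S\subseteq J$ is among the subsets constraining $P_{n,b,c}$, and these constraints only involve the coordinates indexed by $J$.
\item \emph{$P_{\dim V,b,c}\subseteq p_V(P_{n,b,c})$.} Extend $\underline a'\in P_{\dim V,b,c}$ by zeros outside $J$. For any nonempty $S$, the sum $\sum_{i\in S}a_i$ equals $\sum_{i\in S\cap J}a'_i$, which lies in $[c-b,b]$ if $S\cap J\neq\emptyset$. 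If $S\cap J=\emptyset$ the sum is $0$, which also lies in $[c-b,b]$ since $c\leq b$.
\end{itemize}
The main point requiring care is the reformulation in terms of $a_i^\pm$, in particular the degenerate sign patterns, which are exactly where the hypotheses $0\leq c\leq b$ are used; the rest is routine simplex volumes.
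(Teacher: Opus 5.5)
Your proof is correct and follows the paper's route for the volume: your $a_i^\pm$ reformulation is a global restatement of the paper's description of $P_{n,b,c}\cap R_T$ on each orthant, and the measure is then the same sum of products of two simplices. The only deviation is the projection claim, where you argue directly (restricting constraints, plus extension by zeros) instead of computing $p_V(P_{n,b,c}\cap R_T)$ orthant by orthant as the paper does; both arguments are valid.
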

\begin{proof}
For every subset $T\subseteq\{1,\dots,n\}$, let $T^c=\{1,\dots,n\}\smallsetminus T$, and let
$$
R_T=\{\underline a\in\RR^n: a_i\geq 0 \ \forall i\in T, a_j\leq 0 \ \forall j\in T^c\}
$$
be the quadrant associated to $T$. Then $\meas_n(P_{n,b,c})=\sum_{T\subseteq\{1,\dots,n\}}\meas_n(P_{b,c}\cap R_T)$.
Fix $T\subseteq\{1,\dots,n\}$. 
We use the convention that a sum over the empty set is zero.
Then
$$
P_{n,b,c}\cap R_T = \left\{\underline a\in R_T: c-b\leq \sum_{i\in T^c}a_i, \ \sum_{i\in T}a_i \leq b\right\},
$$
as 
$$
 \sum_{i\in T^c}a_i \leq \sum_{i\in S\cap T^c}a_i \leq \sum_{i\in S} a_i \leq \sum_{i\in S\cap T}a_i \leq \sum_{i\in T}a_i
$$
for all nonempty $S\subseteq \{1,\dots,n\}$ and all $\underline a\in R_T$.
Thus 
$
\meas_n(P_{n,b,c}\cap R_T)=\meas_{\#T}(b\Delta^{\#T})\meas_{n-\#T}((b-c)\Delta^{n-\#T}),
$
where for every nonnegative integer $k$, 
$$\Delta^k=\left\{\underline a\in\RR^k: \sum_{i=1}^k a_i\leq 1, \ a_i\geq 0 \ \forall i\in\{1,\dots,k\}\right\}$$ is the $k$-dimensional simplex, and $\meas_k(\Delta^k)=\frac 1{k!}$. Thus
$$
\meas_n(P_{n,b,c})=\sum_{k=0}^n {n \choose k} \frac{b^k(b-c)^{n-k}}{k!(n-k)!}.
$$

Let $V\subseteq \RR^n$ be a coordinate subspace, and let $U\subseteq\{1,\dots,n\}$ such that $V=\{\underline a\in\RR^n: a_i=0\ \forall i\in\{1,\dots,n\}\smallsetminus U\}$. We observe that for every $T\subseteq\{1,\dots,n\}$ we have
$$
p_V(P_{n,b,c}\cap R_T)=\left\{\underline a\in V\cap R_T: c-b\leq \sum_{i\in U\cap T^c}a_i, \ \sum_{i\in U\cap T} a_i\leq b\right\}.
$$
Thus
$
p_V(P_{n,b,c}) = \bigcup_{T\subseteq \{1,\dots,n\}}p_V(P_{n,b,c}\cap R_T) = P_{\dim V,b,c}.
$
\end{proof}

The following proposition gives an asymptotic formula for the number of algebraic integers in $K$ of given norm and bounded height. The proof uses Dirichlet's unit theorem to reduce to a lattice point counting problem involving the polytope in the previous lemma.

\begin{prop}\label{prop:integers of given norm}
Let $I$ be a nonzero principal ideal of $\OO_K$. For all $x\in\RR_{>0}$, let $N''_{N(I)}(x)$ be the set of elements of $\OO_K$ of norm $N(I)$ and of height at most $x$.
Then for $x\geq N(I)$, we have $N''_{N(I)}(x)=\#\mu_K$ if $r_1+r_2=1$, and
$$
N''_{N(I)}(x) =  \frac{\#\mu_K Q_{r_1+r_2-1}(\log x,\log N(I))}{R_K} + O((\log x)^{r_1+r_2-2})
$$
if $r_1+r_2\geq 2$,
where 
$Q_{r_1+r_2-1}$ is the polynomial from Lemma \ref{lemma:volume polytope}, and the implicit constant in the error term is independent of $N(I)$.
\end{prop}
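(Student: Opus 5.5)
Fix a generator $\alpha_0$ of $I$. The elements of $\OO_K$ of norm $N(I)$ (in absolute value) are exactly the generators of the principal ideals $J$ with $N(J)=N(I)$. I read ``of norm $N(I)$'' as the elements $\alpha$ with $\alpha\OO_K=I$, so that they are $\alpha_0\zeta\varepsilon$ with $\zeta\in\mu_K$ and $\varepsilon$ ranging over a free part of $\OO_K^\times$; this matches the leading constant. The plan is to pass to logarithmic coordinates. Consider the map
$$\ell:K^\times\to\RR^{r_1+r_2}, \qquad \beta\mapsto(\log|\beta|_v)_{v\in\Omega_\infty}.$$
Then $\ell(\alpha)$ lies on the hyperplane $\sum_v y_v=\log N(I)$, and $\ell(\OO_K^\times)$ is a lattice $\Lambda$ in the trace-zero hyperplane $H_0$. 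By the product formula, $\log H(\alpha)=\sum_v\max(0,y_v)$. The condition $H(\alpha)\le x$ becomes $\sum_{v\in S}y_v\le \log x$ for all $S\subseteq\Omega_\infty$.

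If $r_1+r_2=1$, the unit group is finite, so the count is $\#\mu_K$ whenever $x\geq N(I)=H(\alpha_0)$. Assume now $r:=r_1+r_2-1\ge1$. Drop the last coordinate $v_0$ by projecting $H_0+\ell(\alpha_0)\to\RR^r$. For $\underline a\in\RR^r$ the omitted coordinate is $\log N(I)-\sum_i a_i$. The conditions for $S$ not containing $v_0$ read $\sum_{i\in S}a_i\le b:=\log x$. For $S\ni v_0$ they read $\log N(I)-\sum_{i\in S^c}a_i\le b$, that is, $\sum_{i\in T}a_i\ge c-b$ with $c=\log N(I)$, for the complementary sets $T$. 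The set $S=\{v_0\}$ gives $\sum_i a_i\ge c-b$, and $S=\Omega_\infty$ gives $c\le b$, which is our hypothesis. The case $T=\emptyset$ is automatic. So the region is exactly $P_{r,b,c}$ of Lemma \ref{lemma:volume polytope}, and we count points of the translated lattice $p(\Lambda)+p(\ell(\alpha_0))$ in $P_{r,b,c}$, multiplied by $\#\mu_K$ since $\ell$ has kernel $\mu_K$. The covolume of $p(\Lambda)$ in $\RR^r$ is $R_K$ by definition of the regulator.

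The key step is the lattice point count: the number of points of a translate of a lattice of covolume $R_K$ in the convex polytope $P_{r,b,c}$ equals $\meas_r(P_{r,b,c})/R_K+O(\text{lower-dimensional projections})$. I would use a Davenport-type or Lipschitz-principle estimate. The error is bounded by a sum of $(r-1)$-dimensional volumes of projections of $P_{r,b,c}$ onto coordinate subspaces, after applying a fixed linear change of basis sending $p(\Lambda)$ to $\ZZ^r$. The main obstacle is making this uniform in $c$ and in the translate, and that is exactly why the lemma records the projection statement $p_V(P_{n,b,c})=P_{\dim V,b,c}$.

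Since the change of basis is not coordinate-aligned, I would instead count via a fundamental parallelepiped $F$ of $p(\Lambda)$. Every lattice point in $P$ has its translate $F$ inside $P+F$, and every translate meeting $P^{-}$ (the set of points at distance $>\operatorname{diam}F$ from the boundary) lies inside $P$. It then suffices to show that $\meas(P_{r,b+C,c}\setminus P_{r,b-C,c})\ll b^{r-1}$ uniformly in $c\le b$. Here $P_{r,b,c}$ is sandwiched between $P_{r,b\mp C,c\pm C'}$, since perturbing $\underline a$ by a bounded vector changes each partial sum by a bounded amount. The explicit formula $Q_r(b,c)$ is a polynomial of degree $r$ whose coefficients of $b^{j}(b-c)^{r-j}$ are positive, so shifting $b$ and $b-c$ by $O(1)$, with $0\le b-c\le b$, changes it by $O(b^{r-1})$ uniformly. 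This gives the count $\#\mu_K\,Q_r(\log x,\log N(I))/R_K+O((\log x)^{r-1})$ with the implied constant depending only on $K$.
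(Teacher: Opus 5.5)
Your reduction is the paper's. You pass to $\ell$ with kernel $\mu_K$, eliminate one archimedean coordinate, and identify the height condition with membership in $P_{r,b,c}$, with $b=\log x$ and $c=\log N(I)$, for a translate of a lattice of covolume $R_K$. Your reading of the count as the generators of $I$, with translate $\ell(\alpha_0)+\Lambda$, is more accurate than the paper's identification of $\ell(\{\alpha:N(\alpha)=N(I)\})$ with $\Lambda+\log N(I)\,\underline d$. The paper's translation vector is wrong in general, though harmlessly, since the count is uniform in the translate. More seriously, the literal set is a union of several translates when several principal ideals share the norm, e.g.\ $(3+\sqrt2)\neq(3-\sqrt2)$ in $\QQ(\sqrt2)$. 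Your reading is also the one the proof of Theorem~\ref{thm:algebraic} actually needs.

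The routes differ in the lattice point count. The paper changes variables by $B$ and quotes \cite{Dav51}, writing the error via coordinate projections of $P_{r,b,c}$; this is why Lemma~\ref{lemma:volume polytope} has its projection clause. Strictly, after the change of variables one needs projections of the transformed polytope, whose $k$-volumes are $\ll(1+b)^k$ only because $P_{r,b,c}\subseteq[-b,b]^r$. This is exactly the subtlety you flag. Your fundamental-parallelepiped sandwich avoids this black box and uses only the volume formula, via $P^{+\delta}\subseteq P_{r,b+C,c}$ and $P_{r,b-C,c}\subseteq P^{-\delta}$ (no shift in $c$ is needed). The outer half is fine, since $Q_r(b+C,c)-Q_r(b,c)\ll(1+b)^{r-1}$ uniformly for $0\leq c\leq b$.

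The inner half does not work as written when $b-c<C$, i.e.\ $x<e^{C}N(I)$. This range is covered by the statement and used in Theorem~\ref{thm:algebraic}. There $P_{r,b-C,c}$ forces every partial sum to be $\geq c-b+C>0$, so it is not of the form $P_{r,b',c'}$ with $0\leq c'\leq b'$. Hence $Q_r(b-C,c)$ is the polynomial evaluated outside the range where Lemma~\ref{lemma:volume polytope} gives the volume.

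The fix is one line. For all $0\leq c\leq b$ one has $P_{r,b-C,c}\supseteq\{\underline a: a_i\geq C\ \forall i,\ \sum_i a_i\leq b-C\}$, whose volume is $(b-(r+1)C)_+^r/r!\geq b^r/r!-O((1+b)^{r-1})$. Meanwhile $b-c<C$ gives $Q_r(b,c)=b^r/r!+O((1+b)^{r-1})$. Alternatively, by convexity $\meas_r(P\setminus P^{-\delta})\leq\delta\cdot\mathrm{area}(\partial P)\ll(1+b)^{r-1}$, since surface area is monotone and $P\subseteq[-b,b]^r$.

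Finally, both your argument and the paper's leave an $O(1)$ term, so the error must be read as $O((1+\log x)^{r_1+r_2-2})$. Indeed, for $x=N(I)=1$ and $r_1+r_2\geq 3$ the count is $\#\mu_K$ while the main term vanishes.
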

\begin{proof}
The number $N''_{N(I)}(x)$ is the cardinality of the set 
$$
A=\{\alpha\in\OO_K:N(\alpha)=N(I), H(\alpha)\leq x\}.
$$
Since 
$$
\ell: K^\times \to\RR^{\Omega_{\infty}}, \qquad \alpha\mapsto (\log|\alpha|_v)_{v\in\Omega_{\infty}},
$$
is a group homomorphism with kernel the group $\mu_K$ of roots of unity in $K$, 
$$
N''_{N(I)}(x)=\#\mu_K\#\ell(A).
$$
We recall that $\Lambda = \ell(\OO_K^\times)$ is a lattice in the hyperplane given by $\sum_{v\in\Omega_{\infty}}a_v=0$. Let $r=r_1+r_2=\#\Omega_{\infty}$, and let $\epsilon_1,\dots,\epsilon_{r-1}$ be independent fundamental units of $\OO_K$. Then $\Lambda$ is defined by the matrix $(\ell(\epsilon_i))_{1\leq i\leq r-1}$.
Let $\underline d=(d_v)_{v\in\Omega_{\infty}}$, with $d_v=1/d_K$ if $v$ is real and $d_v=2/d_K$ if $v$ is complex.
Let $\Lambda_{N(I)}=\Lambda + \log N(I)\underline d$.
Then
$$
\Lambda_{N(I)} =  \ell\left(\{\alpha\in\OO_K: N(\alpha)=N(I)\}\right),
$$
as units are the algebraic integers of norm $1$.
If $r=1$, then $\#\ell(A)=\#\Lambda_{N(I)}=1$. 
We assume that $r\geq 2$.
Since $H(\alpha)=\prod_{v\in\Omega_{\infty}}\max\{1,|\alpha|_v\}$, the condition $H(\alpha)\leq x$ is equivalent to $\prod_{v\in S}|\alpha|_v\leq x$ for all nonempty subsets $S\subseteq\Omega_{\infty}$. Thus
$$
\ell(A)=\left\{\underline a\in \Lambda_{N(I)}: \sum_{v\in S}a_v\leq \log x \ \forall S\subseteq \Omega_{\infty}, S\neq\emptyset\right\}.
$$
 Let $\tilde v\in\Omega_{\infty}$. 
Writing $a_{\tilde v}=\log N(I) - \sum_{v\in\Omega_{\infty}\smallsetminus\{\tilde v\}}a_v$, we get
$$
\ell(A)
=\left\{\underline a\in \Lambda_{N(I)}: \log N(I)-\log x\leq \sum_{v\in S}a_v\leq \log x \ \forall S\subseteq \Omega_{\infty}\smallsetminus\{\tilde v\}, S\neq\emptyset\right\}.
$$
The coordinate projection $\pi:\RR^{\Omega_{\infty}}\to\RR^{\Omega_{\infty}\smallsetminus\{\tilde v\}}$ induces an isomorphism from $\{\sum_{v\in\Omega_{\infty}}a_v=\log N(I)\}$ onto $\RR^{\Omega_{\infty}\smallsetminus\{\tilde v\}}$, so that 
$$
\#\ell(A)=\#\left(\pi(\Lambda_{N(I)})\cap P_{r-1,\log x,\log N(I)}\right),
$$
where $P_{r-1,\log x,\log N(I)}\subseteq\RR^{r-1}$ is the polytope in Lemma \ref{lemma:volume polytope}. 
We observe that $\pi(\Lambda_{N(I)})=\pi(\Lambda)+\log N(I) (d_v)_{v\in\Omega_{\infty}\smallsetminus\{\tilde v\}}$, and that $\pi(\Lambda)\subseteq \RR^{\Omega_{\infty}\smallsetminus\{\tilde v\}}$ is the full rank lattice  given by the matrix $B=(\log|\epsilon_i|_v)_{1\leq i\leq r-1, v\in\Omega_{\infty}\smallsetminus\{\tilde v\}}$. 
Translating by $-\log N(I) (d_v)_{v\in\Omega_{\infty}\smallsetminus\{\tilde v\}}$ yields a lattice point counting problem. Performing a change of variables by $B$ and using the fact that volume is invariant under translation, we can apply \cite{Dav51} to get 
\begin{gather*}
\#\ell(A) 
= \#(\pi(\Lambda)\cap (P_{r-1,\log x,\log N(I)}-\log N(I) (d_v)_{v\in\Omega_{\infty}\smallsetminus\{\tilde v\}}))
\\ = \frac {\meas_{r-1}(P_{r-1,\log x,\log N(I)})}{\det B} + O\left(\sum_V\meas_{\dim V}(p_V(P_{r-1,\log x,\log N(I)}))\right),
\end{gather*}
where the sum runs over all coordinate subspaces of $\RR^{\Omega_{\infty}\smallsetminus\{\tilde v\}}$. Since $R_K=\det B$ by definition, an application of Lemma \ref{lemma:volume polytope} gives
\begin{equation*}
\#\ell(A) = \frac{Q_{r-1}(\log x,\log N(I))}{R_K} + O((\log x)^{r-2}). 
\qedhere
\end{equation*}
\end{proof}

The special case $I=\OO_K$ of Proposition \ref{prop:integers of given norm} gives an asymptotic formula for the number of units in $\OO_K$ of bounded height. A claim without explicit leading constant can be found in \cite[Theorem 3.5.2]{Lan83}.

\section{$m$-full algebraic integers}

In this section we combine Theorem \ref{thm} and Proposition \ref{prop:integers of given norm} to prove Theorem \ref{thm:algebraic}.

\begin{proof}[Proof of Theorem \ref{thm:algebraic}]
Let $\idealfulltriv= \{I\in\idealfull: [I]=[\OO_K]\}$.
Since principal ideals of $\OO_K$ are equal if and only if they have the same norm,
$$N'_{m}(x)=1+ \sum_{I\in\idealfulltriv, N(I)\leq x} N''_{N(I)}(x).$$
If $r_1+r_2=1$ the result follows by Proposition \ref{prop:integers of given norm}, Theorem \ref{thm} for $m\geq 2$ and \eqref{eq:ideals of bounded norm} for $m=1$.
We assume that $r_1+r_2\geq 2$.
We apply Proposition \ref{prop:integers of given norm} and the bound $N_{m,[\OO_K]}(x)=O(x^{\frac 1m})$ from Theorem \ref{thm} for $m\geq 2$ and from \eqref{eq:ideals of bounded norm} for $m=1$, to get
$$
N'_m(x)=\frac{\#\mu_K}{R_K}\sum_{I\in\idealfulltriv, N(I)\leq x}Q(\log x,\log N(I)) +O(x^{\frac 1m}(\log x)^{r_1+r_2-2}),
$$
where $Q$ is the polynomial $Q_{r_1+r_2-1}$ from Lemma \ref{lemma:volume polytope}. 
Since $Q$ is a homogeneous polynomial, $$Q(\log x,\log N(I))=(\log x)^{r_1+r_2-1}Q\left(1,\frac{\log N(I)}{\log x}\right).$$ The Taylor series expansion of $Q(1,t)$ at $t=1$ gives 
\begin{equation*}\label{eq:Q simplified}
Q(\log x,\log N(I))=(\log x)^{r_1+r_2-1} Q(1,1) + O((\log x)^{r_1+r_2-2}(\log x-\log N(I)).
\end{equation*}
Hence,
\begin{gather*}
\sum_{I\in\idealfulltriv, N(I)\leq x}Q(\log x,\log N(I))
\\ =Q(1,1)(\log x)^{r_1+r_2-1} N_{m,[\OO_K]}(x) + O((\log x)^{r_1+r_2-2}(E_1 + E_2)),
\end{gather*}
where
$$
E_1 = \sum_{I\in\idealfulltriv, N(I)\leq \frac{x}{(\log x)^m}} (\log x-\log N(I)) \leq  N_{m,[\OO_K]}\left(\frac x{(\log x)^m}\right) \log x \ll x^{\frac 1m},
$$
and 
\begin{gather*}
E_2 = \sum_{I\in\idealfulltriv, \frac{x}{(\log x)^m}\leq N(I)\leq x} (\log x-\log N(I)) 
\\ \ll  N_{ m,[\OO_K]}(x) \log\log x \ll x^{\frac 1m}\log\log x,
\end{gather*}
as $\log x-\log N(I)\leq \log x-\log (\frac x{(\log x)^m})=m\log\log x$.
We conclude by applying Theorem \ref{thm} as $\nu = \max\left\{\frac 1{m+1},\frac{d_K-1}{md_K}\right\}<\frac 1m$.
\end{proof}

\subsection*{Acknowledgements}
This paper originated from the Master's thesis project of the first named author under the supervision of the second named author. 
The second named author thanks the organisers of the thematic program ``Universal Statistics in Number Theory'' for the hospitality of Centre de Recherches Math\'ematiques in Montreal where this paper was completed.
The second named author was partially supported by the Dutch Research Council (NWO) grants VI.Vidi.213.019 and OCENW.XL21.XL21.011.
For the purpose of open access, a CC BY public copyright license is applied to any Author Accepted Manuscript version arising from this submission.

\bibliographystyle{alpha}
\bibliography{m-fullness}

@article {Bar14,
    AUTHOR = {Barroero, F.},
     TITLE = {Counting algebraic integers of fixed degree and bounded
              height},
   JOURNAL = {Monatsh. Math.},
  FJOURNAL = {Monatshefte f\"ur Mathematik},
    VOLUME = {175},
      YEAR = {2014},
    NUMBER = {1},
     PAGES = {25--41},
      ISSN = {0026-9255,1436-5081},
   MRCLASS = {11G50 (11R04)},
  MRNUMBER = {3249885},
MRREVIEWER = {Christopher\ Frei},
       DOI = {10.1007/s00605-013-0599-6},
       URL = {https://doi-org.utrechtuniversity.idm.oclc.org/10.1007/s00605-013-0599-6},
}

@article {BG58,
    AUTHOR = {Bateman, P.~T. and Grosswald, E.},
     TITLE = {On a theorem of {E}rd\"os and {S}zekeres},
   JOURNAL = {Illinois J. Math.},
  FJOURNAL = {Illinois Journal of Mathematics},
    VOLUME = {2},
      YEAR = {1958},
     PAGES = {88--98},
      ISSN = {0019-2082},
   MRCLASS = {10.00},
  MRNUMBER = {0095804},
MRREVIEWER = {N. G. de Bruijn},
       URL = {http://projecteuclid.org/euclid.ijm/1255380836},
}

@article {Dav51,
    AUTHOR = {Davenport, H.},
     TITLE = {On a principle of {L}ipschitz},
   JOURNAL = {J. London Math. Soc.},
  FJOURNAL = {The Journal of the London Mathematical Society},
    VOLUME = {26},
      YEAR = {1951},
     PAGES = {179--183},
      ISSN = {0024-6107,1469-7750},
   MRCLASS = {10.0X},
  MRNUMBER = {43821},
MRREVIEWER = {W.\ H.\ Mills},
       DOI = {10.1112/jlms/s1-26.3.179},
       URL = {https://doi-org.utrechtuniversity.idm.oclc.org/10.1112/jlms/s1-26.3.179},
}

@article{ES34,
	author = {Erd\H{o}s, P. and Szekeres, G.},
	title = {{\"U}ber die {A}nzahl der {A}belschen {G}ruppen gegebener {O}rdnung und \"uber ein verwandtes zahlentheoretisches {P}roblem},
	journal = {Acta Univ. Szeged. Sect. Sci. Math.},
	volume = {7},
	year = {1934},
	number = {2},
	pages = {92 -- 102},
}

@article {GG17,
    AUTHOR = {Grizzard, R. and Gunther, J.},
     TITLE = {Slicing the stars: counting algebraic numbers, integers, and
              units by degree and height},
   JOURNAL = {Algebra Number Theory},
  FJOURNAL = {Algebra \& Number Theory},
    VOLUME = {11},
      YEAR = {2017},
    NUMBER = {6},
     PAGES = {1385--1436},
      ISSN = {1937-0652,1944-7833},
   MRCLASS = {11N45 (11G50 11H16 11P21 11R04 11R06)},
  MRNUMBER = {3687101},
MRREVIEWER = {Dzianis\ Kaliada},
       DOI = {10.2140/ant.2017.11.1385},
       URL = {https://doi-org.utrechtuniversity.idm.oclc.org/10.2140/ant.2017.11.1385},
}

@book {Lan83,
    AUTHOR = {Lang, S.},
     TITLE = {Fundamentals of {D}iophantine geometry},
 PUBLISHER = {Springer-Verlag, New York},
      YEAR = {1983},
     PAGES = {xviii+370},
      ISBN = {0-387-90837-4},
   MRCLASS = {11-02 (11Dxx 11Gxx 14G25)},
  MRNUMBER = {715605},
MRREVIEWER = {Gerd\ Faltings},
       DOI = {10.1007/978-1-4757-1810-2},
       URL = {https://doi-org.utrechtuniversity.idm.oclc.org/10.1007/978-1-4757-1810-2},
}

@book {Lan94,
    AUTHOR = {Lang, S.},
     TITLE = {Algebraic number theory},
    SERIES = {Graduate Texts in Mathematics},
    VOLUME = {110},
   EDITION = {Second},
 PUBLISHER = {Springer-Verlag, New York},
      YEAR = {1994},
     PAGES = {xiv+357},
      ISBN = {0-387-94225-4},
   MRCLASS = {11Rxx (11-01 11-02)},
  MRNUMBER = {1282723},
MRREVIEWER = {M.\ Ram\ Murty},
       DOI = {10.1007/978-1-4612-0853-2},
       URL = {https://doi-org.utrechtuniversity.idm.oclc.org/10.1007/978-1-4612-0853-2},
}

@article {PTBZ25,
    AUTHOR = {Pierce, L. B. and Turnage-Butterbaugh, C. L. and
              Zaman, A.},
     TITLE = {A guide to {T}auberian theorems for arithmetic applications},
   JOURNAL = {Essent. Number Theory},
  FJOURNAL = {Essential Number Theory},
    VOLUME = {5},
      YEAR = {2026},
    NUMBER = {2},
     PAGES = {271--374},
      ISSN = {2834-4626,2834-4634},
   MRCLASS = {11M45 (40E05)},
  MRNUMBER = {5124773},
       DOI = {10.2140/ent.2026.5.271},
       URL = {https://doi.org/10.2140/ent.2026.5.271},
}

@article {PSTVA21,
    AUTHOR = {Pieropan, M. and Smeets, A. and Tanimoto, S. and
              V\'{a}rilly-Alvarado, A.},
     TITLE = {Campana points of bounded height on vector group
              compactifications},
   JOURNAL = {Proc. Lond. Math. Soc. (3)},
  FJOURNAL = {Proceedings of the London Mathematical Society. Third Series},
    VOLUME = {123},
      YEAR = {2021},
    NUMBER = {1},
     PAGES = {57--101},
      ISSN = {0024-6115},
   MRCLASS = {11G50 (11G35 14G05 14G10)},
  MRNUMBER = {4307130},
       DOI = {10.1112/plms.12391},
       URL = {https://doi.org/10.1112/plms.12391},
}

@article {Wid16,
    AUTHOR = {Widmer, M.},
     TITLE = {Integral points of fixed degree and bounded height},
   JOURNAL = {Int. Math. Res. Not. IMRN},
  FJOURNAL = {International Mathematics Research Notices. IMRN},
      YEAR = {2016},
    NUMBER = {13},
     PAGES = {3906--3943},
      ISSN = {1073-7928,1687-0247},
   MRCLASS = {14G25 (11G05)},
  MRNUMBER = {3544624},
MRREVIEWER = {John\ L.\ Boxall},
       DOI = {10.1093/imrn/rnv268},
       URL = {https://doi-org.utrechtuniversity.idm.oclc.org/10.1093/imrn/rnv268},
}
\end{document}